\newtheorem{theorem}{Theorem}[section]
\newtheorem{lemma}[theorem]{Lemma}
\theoremstyle{definition}
\newtheorem{definition}[theorem]{Definition}
\numberwithin{equation}{section}
\newcommand{\newcom}{\newcommand}
\newcommand{\beq}{\begin{equation}}
\newcommand{\eeq}{\end{equation}}
\newcom{\ben}{\begin{eqnarray}}
\newcom{\een}{\end{eqnarray}}
\newcom{\beno}{\begin{eqnarray*}}
\newcom{\eeno}{\end{eqnarray*}}
\newcom{\bali}{\begin{aligned}}
\newcom{\eali}{\end{aligned}}
\newcommand{\f}{\frac}
\newcommand{\p}{\partial}
\newcommand{\vu}{\mathbf{u}}
\newcommand{\vc}[1]{{\boldsymbol #1}}
\newcommand{\Div}{{\rm div}}
\newcommand{\Grad}{\nabla}
\newcommand{\dx}{{\rm d} x}
\newcommand{\dt}{{\rm d} t }
\newcommand{\ds}{{\rm d} s}
\newcommand{\dxdt}{\dx\dt}
\font\F=msbm10 scaled 1000
\newcommand{\R}{\mbox{\F R}}
\begin{document}

\title{Remarks on weak-strong uniqueness for two-fluid model}

\author{Yang Li}
\address{School of Mathematical Sciences,  Anhui University, Hefei 230601, People's Republic of China}
\email{lynjum@163.com}
\thanks{The research of Y. Li was supported by National Natural Science Foundation of China under grant number 12001003.}

\author{Ewelina Zatorska}
\address{Department of Mathematics, Imperial College London,
 London SW7 2AZ, United Kingdom}
\email{e.zatorska@imperial.ac.uk}
\thanks{The work of E.Z. was supported by the EPSRC Early Career Fellowship no. EP/V000586/1.}

\subjclass[2010]{Primary 35D30; 76T17}



\keywords{Two-fluid model, weak-strong uniqueness}

\begin{abstract}
This paper concerns with the compressible two-fluid model with algebraic pressure closure. We prove a conditional weak-strong uniqueness principle, meaning that a finite energy weak solution, with bounded densities, coincides with the classical solution on the lifespan of the latter emanating from the same initial data.
\end{abstract}

\maketitle


\section*{Contents}
1. Introduction

2. Weak-strong uniqueness principle

3. An auxiliary lemma

References

\section{Introduction}
We consider the compressible two-fluid model with algebraic pressure closure in the three-dimensional torus $\mathbb{T}^3$:
\begin{equation}\label{eq1}
\left\{\begin{aligned}
& \p_t R+\Div_x (R \vu)=0,\\
& \p_t Q+\Div_x (Q \vu)=0,\\
& \p_t [ (R+Q)\vu ]+\Div_x [(R+Q) \vu \otimes \vu ]+\Grad_x p(Z)=\mu \Delta \vu+ (\mu+\lambda)\Grad_x \Div_x \vu.\\
\end{aligned}\right.
\end{equation}
Here, $R$ and $Q$ are densities of two fluids; $\vu\in\R^3$ means the velocity field. $p=p(Z)$ stands for the scalar pressure, relating implicitly to $R,Q$ through
\begin{equation*}
\left\{\begin{aligned}
& Q=\left(1-\f{R}{Z}  \right)Z^{\gamma},\,\,\gamma=\gamma_{+}/ \gamma_{-},  \\
& R\leq Z.\\
\end{aligned}\right.
\end{equation*}
The model describes the motion of two immiscible compressible fluids sharing the same
velocity field and obeying the algebraic pressure closure. The derivation of (\ref{eq1}) may be found in Bresch et al. \cite{BMZ19}. We refer to the monographs \cite{BDGG,IsHi06} for more discussions on related models.

Since the problem under consideration is evolutionary, we supplement (\ref{eq1}) with the initial conditions:
\beq\label{eq2}
(R,Q,\vu)|_{t=0}=(R_0,Q_0,\vu_0).
\eeq

We now introduce the concept of finite energy weak solution.
\begin{definition}
$(R,Q,\vu)$ is said to be a finite energy weak solution to the problem (\ref{eq1})-(\ref{eq2}) in $(0,T)\times \mathbb{T}^3$ provided that
\begin{itemize}
\item {  \[
(R,Z)\in L^{\infty}(0,T; L^{\gamma_{+}}(\mathbb{T}^3)),\,\, Q \in L^{\infty}(0,T; L^{\gamma_{-}}(\mathbb{T}^3)),
\]
\[
\Grad_x \vu \in L^2(0,T;L^2(\mathbb{T}^3;\R^{3\times 3}));
\]
}

\item { the equation of continuity for $R$
\[
\int_0^T\int_{\mathbb{T}^3}
\left(R \p_t \phi +R \vu\cdot \Grad_x \phi \right)\dxdt +\int_{\mathbb{T}^3}R_{0}\phi(0,\cdot)\dx=0
\] \\
for any $\phi \in C_c^{\infty}([0,T)\times \mathbb{T}^3)$;
}

\item { the equation of continuity for $Q$
\[
\int_0^T\int_{\mathbb{T}^3}
\left(Q \p_t \phi +Q \vu\cdot \Grad_x \phi \right)\dxdt +\int_{\mathbb{T}^3}Q_{0}\phi(0,\cdot)\dx=0
\] \\
for any $\phi \in C_c^{\infty}([0,T)\times \mathbb{T}^3)$;
}

\item {the momentum equation
\[
\int_0^T\int_{\mathbb{T}^3}
\Big(
(R+Q)\vu \cdot \p_t \vc{\varphi} + (R+Q) \vu\otimes \vu : \Grad_x \vc{\varphi}  +p(Z)\Div_x \vc{\varphi}
\Big) \dxdt
\]
\[
=
\int_0^T\int_{\mathbb{T}^3} \Big(\mu \Grad_x \vu : \Grad_x \vc{\varphi} +(\mu+\lambda)\Div_x\vu\, \Div_x \vc{\varphi} \Big)
 \dxdt-
\int_{\mathbb{T}^3} (R_0+Q_0)\vu_{0}\cdot \vc{\varphi} (0,\cdot)\dx
\] \\
for any $\vc{\varphi} \in C_c^{\infty}([0,T)\times \mathbb{T}^3;\R^3)$;
}

\item {the energy inequality holds a.e. in $(0,T)$
\[
\int_{\mathbb{T}^3}\left[
\f{1}{2}(R+Q)|\vu|^2+\f{1}{\gamma_{+}-1}\left( \f{R}{\alpha} \right)^{\gamma_{+}}\alpha
+\f{1}{\gamma_{-}-1}\left( \f{Q}{1-\alpha} \right)^{\gamma_{-}}(1-\alpha) \right](t,x)
\dx
\]
\[
+\int_0^t \int_{ \mathbb{T}^3 } \Big[ \mu |\Grad_x \vu|^2 +(\mu+\lambda)(\Div_x \vu)^2 \Big] \dxdt
\]
\[
\leq
\int_{\mathbb{T}^3}\left[
\f{1}{2}(R_0+Q_0)|\vu_0|^2+\f{1}{\gamma_{+}-1}\left( \f{R_0}{\alpha_0} \right)^{\gamma_{+}}\alpha_0
+\f{1}{\gamma_{-}-1}\left( \f{Q_0}{1-\alpha_0} \right)^{\gamma_{-}}(1-\alpha_0) \right]
\dx
\]
where we set
\[
\alpha:= \f{R}{Z}.
\]
}

\end{itemize}
\end{definition}

The existence of finite energy weak solutions for system (\ref{eq1}) was first proved by Bresch et al. \cite{BMZ19} in the semi-stationary regime, which was later extended by Novotn\'{y} et al. \cite{NM20} in the general case. In a series of work, for instance \cite{FJN,Ger11}, the fundamental property of weak-strong uniqueness was verified for the compressible Navier-Stokes system in the framework of weak solutions. However, due to the complicated form of pressure, much less is known for the compressible two-fluid models. Very recently, Jin et al. \cite{JN19} proved the weak-strong uniqueness for the two-fluid model of Baer-Nunziato type. It should be noticed that the basic tool in their proof is the celebrated relative energy inequality.

As a consequence, it is natural to explore the property of weak-strong uniqueness for the two-fluid model (\ref{eq1}). This is the motivation of the present note. However, due to the implicit form of pressure, it is not clear how to apply the well-developed method of relative entropy. Instead, we appeal to the Gronwall-type argument, inspired by Germain \cite{Ger11} and Desjardins \cite{Des97}. As the expense, the boundedness of densities for finite energy weak solutions is imposed additionally. More precisely, we have the following result.

\begin{theorem}\label{w-s}
Let $(R,Q,\vu)$ be a finite energy weak solution to (\ref{eq1})-(\ref{eq2}) such that
\[
(R,Q) \in L^{\infty} (0,T;L^{\infty}(\mathbb{T}^3)   ).
\]
Assume that $(\tilde{R},\tilde{Q},\tilde{\vu})$ is the classical solution to the same problem on $[0,T]$, starting from the same initial data. Then
\[
R=\tilde{R},\,\,Q=\tilde{Q},\,\,\vu=\tilde{\vu} \,\,\,\,  \text{    in    } [0,T]\times \mathbb{T}^3.
\]
\end{theorem}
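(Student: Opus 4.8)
The plan is to run a Gronwall argument on the error functional
\[
\mathcal{E}(t):=\int_{\mathbb{T}^3}\Big[\f12 (R+Q)\abs{\vu-\tilde\vu}^2+\f12(R-\tilde R)^2+\f12(Q-\tilde Q)^2\Big](t,\cdot)\,\dx,
\]
which vanishes at $t=0$ because the data coincide. Writing $\vc{w}:=\vu-\tilde\vu$, $r:=R-\tilde R$, $q:=Q-\tilde Q$, the goal is to prove, for some $\nu>0$,
\[
\mathcal{E}(t)+\nu\int_0^t\!\!\int_{\mathbb{T}^3}\abs{\Grad_x\vc{w}}^2\dxdt\ \le\ \int_0^t C(s)\,\mathcal{E}(s)\,\ds,\qquad C\in L^1(0,T),
\]
where $C$ is built from $\|\tilde\vu\|_{W^{1,\infty}}$, $\|(\tilde R,\tilde Q)\|_{W^{1,\infty}}$, $\|\Delta\tilde\vu\|_{L^\infty}$, $\|\Grad_x p(\tilde Z)\|_{L^\infty}$, the bound $\tilde R+\tilde Q\ge c>0$ (valid because the classical solution is free of vacuum), the uniform bounds on $(R,Q)$, and the constant from the auxiliary lemma of Section~3. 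Gronwall then forces $\mathcal{E}\equiv 0$, so $R=\tilde R$ and $Q=\tilde Q$ a.e., and then $R+Q=\tilde R+\tilde Q\ge c>0$ yields $\vu=\tilde\vu$ a.e.

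For the density part I would use that, since $R,Q\in L^\infty$ and $\Grad_x\vu\in L^2$, the two continuity equations are renormalized (DiPerna--Lions; jointly, $\p_t b(R,Q)+\Div_x(b(R,Q)\vu)+(R\,\p_R b+Q\,\p_Q b-b)\Div_x\vu=0$ for $b\in C^1$), which is precisely where the boundedness hypothesis enters. Subtracting the equations and using $R\vu-\tilde R\tilde\vu=r\vu+\tilde R\vc{w}$ gives $\p_t r+\Div_x(r\vu)+\Div_x(\tilde R\vc{w})=0$; testing against $r$, which is justified by renormalization, yields $\tfrac12\tfrac{d}{\dt}\|r\|_{L^2}^2=-\tfrac12\int_{\mathbb{T}^3}r^2\,\Div_x\vu\,\dx-\int_{\mathbb{T}^3}r\,\Div_x(\tilde R\vc{w})\,\dx$. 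Splitting $\Div_x\vu=\Div_x\vc{w}+\Div_x\tilde\vu$ and $\Div_x(\tilde R\vc{w})=\vc{w}\cdot\Grad_x\tilde R+\tilde R\Div_x\vc{w}$, each resulting term is $\le C\mathcal{E}+\delta\|\Grad_x\vc{w}\|_{L^2}^2$: the $r^2\,\Div_x\vc{w}$ and $r\,\tilde R\,\Div_x\vc{w}$ terms via $\|r\|_{L^\infty}\le\|R\|_{L^\infty}+\|\tilde R\|_{L^\infty}$ and Young; the $r^2\,\Div_x\tilde\vu$ term via $\Div_x\tilde\vu\in L^\infty$; and the $r\,\vc{w}\cdot\Grad_x\tilde R$ term via $\Grad_x\tilde R\in L^\infty$ together with the elementary estimate $\|\vc{w}\|_{L^2}\le C(\|\Grad_x\vc{w}\|_{L^2}+\|r\|_{L^2}+\|q\|_{L^2})$, which follows from Poincar\'e's inequality on $\mathbb{T}^3$ and the control of $\int_{\mathbb{T}^3}(R+Q)\vc{w}\,\dx$ by conservation of total momentum. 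The same holds for $q$.

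For the kinetic part I would combine four ingredients: (i) the energy inequality for the weak solution; (ii) the identity $\tfrac{d}{\dt}\int_{\mathbb{T}^3}H(R,Q)\,\dx=-\int_{\mathbb{T}^3}p(Z)\,\Div_x\vu\,\dx$, consistent with the energy inequality and again a consequence of renormalization (this is what lets one rewrite $\int H(R_0,Q_0)-\int H(R,Q)(t)=\int_0^t\!\int p(Z)\Div_x\vu$); (iii) the continuity equation for $R+Q$ tested against $\tfrac12\abs{\tilde\vu}^2$; and (iv) the weak momentum equation tested against $\tilde\vu$, after which $\p_t\tilde\vu$ is eliminated via the classical momentum equation. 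After a careful bookkeeping (and a standard time-regularisation, since $\tilde\vu$ need not vanish at $t=T$, which produces the boundary term $\int_{\mathbb{T}^3}(R+Q)\vu\cdot\tilde\vu$ using weak continuity in time of the momentum), all viscous contributions assemble into the relative dissipation $-\int_0^t\!\int_{\mathbb{T}^3}\big(\mu\abs{\Grad_x\vc{w}}^2+(\mu+\lambda)(\Div_x\vc{w})^2\big)$, the pressure contributions into $\int_0^t\!\int_{\mathbb{T}^3}\big(p(Z)-p(\tilde Z)\big)\Div_x\vc{w}$, and the convective plus $\p_t\tilde\vu$ contributions into $-\int_0^t\!\int_{\mathbb{T}^3}(R+Q)\big((\vc{w}\cdot\Grad_x)\tilde\vu\big)\cdot\vc{w}$ together with $-\int_0^t\!\int_{\mathbb{T}^3}\f{r+q}{\tilde R+\tilde Q}\,\vc{w}\cdot\big(\mu\Delta\tilde\vu+(\mu+\lambda)\Grad_x\Div_x\tilde\vu-\Grad_x p(\tilde Z)\big)$. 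The first remainder is $\le\|\Grad_x\tilde\vu\|_{L^\infty}\int_{\mathbb{T}^3}(R+Q)\abs{\vc{w}}^2\le C\mathcal{E}$; the second is $\le C\|(r,q)\|_{L^2}\|\vc{w}\|_{L^2}\le C\mathcal{E}+\delta\|\Grad_x\vc{w}\|_{L^2}^2$ (again using the estimate for $\|\vc w\|_{L^2}$); and the pressure term is controlled by the auxiliary lemma of Section~3, which yields $\abs{p(Z)-p(\tilde Z)}\le C(\abs{r}+\abs{q})$ on the compact range of the densities, so it is $\le\delta\|\Div_x\vc{w}\|_{L^2}^2+C_\delta\mathcal{E}$. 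Adding the density estimates and absorbing all $\delta$-terms into the relative dissipation gives the target inequality.

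The main obstacle is twofold. First, making these manipulations rigorous for the merely weak solution: using $\tilde\vu$ and $\abs{\tilde\vu}^2$ as test functions (the time-regularisation near $t=T$) and, above all, the renormalization of the two continuity equations and of the internal energy $H(R,Q)$ — this is precisely the step that forces the extra hypothesis $R,Q\in L^\infty$, and it is the role of the auxiliary lemma of Section~3. Second, and specific to this model, the pressure: because $Z$, hence $p(Z)$, is defined only implicitly through $Q=(1-R/Z)Z^\gamma$, the relative-entropy method — which would require the convex pressure potential $H$ together with its first and second derivatives — is not directly available; one must instead show that the map $(R,Q)\mapsto Z$ is well defined and Lipschitz on the relevant bounded set, which is again supplied by the auxiliary lemma of Section~3. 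It is exactly this difficulty that dictates the direct Gronwall strategy in place of a relative-entropy inequality.
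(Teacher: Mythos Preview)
Your strategy is sound and lands on the same Gronwall-type conclusion as the paper, but two points of execution differ and one reference is misplaced.

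\textbf{Kinetic estimate.} The paper does \emph{not} pass through the energy inequality plus test functions $\tilde\vu$, $\tfrac12|\tilde\vu|^2$ as you propose. Instead it formally subtracts the momentum equations to obtain an equation for $\mathbf{U}=\vu-\tilde\vu$ (equation~(\ref{w-s-5})) and tests that with $\mathbf{U}$ directly. Your relative-energy bookkeeping is arguably more natural for a genuine weak solution; the paper explicitly flags its manipulations as ``formal computations'' to be justified by an unspecified regularization.

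\textbf{Gronwall step.} In your density estimate you absorb the cross term $r^2\,\Div_x\vc w$ via $\|r\|_{L^\infty}$ and Young, which yields a standard Gronwall inequality for $\mathcal E$. The paper keeps the density estimates in the form $\tfrac{d}{dt}\|\mathfrak R\|_{L^2}\le C\|\Grad_x\mathbf U\|_{L^2}+\ldots$ (Lemma~\ref{lem2.1}), integrates once, and substitutes back into the kinetic inequality; this produces the structure $f'+(g')^2\le\alpha f+\beta g g'$ with $g(t)=\int_0^t\|\Grad_x\mathbf U\|_{L^2}\,ds$, for which the \emph{generalized} Gronwall lemma of Section~\ref{app-1} (borrowed from Germain) is invoked. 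Your route avoids this extra lemma altogether, which is a mild simplification.

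\textbf{Misidentified lemma.} The ``auxiliary lemma of Section~3'' is \emph{not} the Lipschitz bound on $(R,Q)\mapsto p(Z)$ that you invoke twice; it is precisely the generalized Gronwall inequality above. The Lipschitz bound you need is established inline in the proof of Lemma~\ref{lem2.2}: one computes $\p_R Z$, $\p_Q Z$ explicitly from the implicit relation $Q=(1-R/Z)Z^\gamma$, obtaining $|\p_R Z|\le 1/\gamma$ and $|\p_Q Z|\le Z^{1-\gamma}/\gamma$, and then bounds $Z$ by $C(M)$ under the $L^\infty$ hypothesis on the densities. This does not affect the correctness of your argument, only the citations.
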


Observe that the local existence and uniqueness of classical solutions as well as the global existence and uniqueness of classical solutions under smallness of initial data have recently been obtained by Piasecki and Zatorska \cite{PZ1} in a $L^p-L^q$ maximal regularity setting. Our main theorem gives the stability of classical solutions within finite energy weak solutions by imposing the boundedness of densities additionally. The rest of this note is devoted to its proof. 

\section{Weak-strong uniqueness principle}

In the sequel, we shall present the formal computations of the main steps without caring about the regularity issues of $(R,Q,\vu)$,  in analogy with the classical literature \cite{Des97,Ger11}. The rigorous proof is implemented with the standard regularization procedure, which is omitted here. 

To simplify the notations, we denote by
\[
     \mathfrak{R}=R- \tilde{R},\,\, \mathcal{Q}=Q- \tilde{Q}           ,\,\,\mathbf{U}:=\vu-\tilde{\vu},
\]
where $(R,Q,\vu)$ and $(\tilde{R},\tilde{Q},\tilde{\vu})$ satisfy the assumptions of Theorem \ref{w-s}. Moreover, due to the symmetry of the problem, we may assume, without loss of generality, that $\gamma_{+}\leq \gamma_{-}$, i.e., $\gamma\leq 1$.

To begin with, we estimate the $L^2(\mathbb{T}^3)$-norm of $(\mathfrak{R},\mathcal{Q})$.
\begin{lemma}\label{lem2.1}
\[
\f{\rm{d}}{\dt} \|\mathfrak{R}(t)\|_{L^2(\mathbb{T}^3)}
\leq
C \Big(
(\|R\|_{L^{\infty}(\mathbb{T}^3)}+ \|\tilde{R}\|_{L^{\infty}(\mathbb{T}^3)} ) \|\Grad_x\mathbf{U} \|_{L^2(\mathbb{T}^3)}
\Big)
\]
\beq\label{w-s-1}
+C\Big(
\|\Grad_x \tilde{\vu}\|_{L^{\infty}(\mathbb{T}^3)}\|\mathfrak{R}\|_{L^2(\mathbb{T}^3)}+
\| \Grad_x\tilde{R} \|_{L^3(\mathbb{T}^3)} \|\mathbf{U}\|_{L^6(\mathbb{T}^3)}
\Big);
\eeq

\[
\f{\rm{d}}{\dt} \|\mathcal{Q}(t)\|_{L^2(\mathbb{T}^3)}
\leq
C \Big(
(\|Q\|_{L^{\infty}(\mathbb{T}^3)}+ \|\tilde{Q}\|_{L^{\infty}(\mathbb{T}^3)} ) \|\Grad_x\mathbf{U} \|_{L^2(\mathbb{T}^3)}
\Big)
\]
\beq\label{w-s-2}
+C\Big(
\|\Grad_x \tilde{\vu}\|_{L^{\infty}(\mathbb{T}^3)}\|\mathcal{Q}\|_{L^2(\mathbb{T}^3)}+
\| \Grad_x\tilde{Q} \|_{L^3(\mathbb{T}^3)} \|\mathbf{U}\|_{L^6(\mathbb{T}^3)}
\Big).
\eeq

\end{lemma}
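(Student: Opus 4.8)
The plan is to derive both differential inequalities by testing the (regularized) continuity equations for $\mathfrak{R}$ and $\mathcal{Q}$ against $\mathfrak{R}$ and $\mathcal{Q}$ respectively, exploiting the structural identity of the two equations. I will spell out only the estimate for $\mathfrak{R}$ in detail since the one for $\mathcal{Q}$ is entirely symmetric. Subtracting the continuity equation for $\tilde R$ from the one for $R$ gives
\[
\p_t \mathfrak{R} + \Div_x(R\vu) - \Div_x(\tilde R\tilde\vu) = 0,
\]
and the transport flux is rewritten as
\[
\Div_x(R\vu) - \Div_x(\tilde R\tilde\vu) = \Div_x(\mathfrak{R}\,\tilde\vu) + \Div_x(R\,\mathbf{U}),
\]
so that $\p_t\mathfrak{R} + \Div_x(\mathfrak{R}\,\tilde\vu) + \Div_x(R\,\mathbf{U}) = 0$. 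This is the standard way of isolating a transport term in the strong velocity $\tilde\vu$ (to be controlled by a Gronwall factor) from a source term involving the velocity difference $\mathbf{U}$.

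Next I multiply by $\mathfrak{R}$ and integrate over $\mathbb{T}^3$. The transport term is handled by the identity $\int_{\mathbb{T}^3}\Div_x(\mathfrak{R}\,\tilde\vu)\,\mathfrak{R}\,\dx = -\tfrac12\int_{\mathbb{T}^3}|\mathfrak{R}|^2\Div_x\tilde\vu\,\dx$ after integration by parts, which is bounded by $\tfrac12\|\Grad_x\tilde\vu\|_{L^\infty}\|\mathfrak{R}\|_{L^2}^2$. For the source term I write $\Div_x(R\,\mathbf{U}) = R\,\Div_x\mathbf{U} + \mathbf{U}\cdot\Grad_x R$; splitting $\Grad_x R = \Grad_x\mathfrak{R} + \Grad_x\tilde R$ and integrating the $\Grad_x\mathfrak{R}$ piece by parts produces another copy of $\int|\mathfrak{R}|^2\Div_x\tilde\vu$ plus $\int(\mathbf{U}\cdot\Grad_x\tilde\vu)\cdot$-type terms — more cleanly, one keeps $\Div_x(R\mathbf U)$ together, integrates by parts once to move the divergence onto $\mathfrak R$, and estimates $\bigl|\int_{\mathbb{T}^3} R\,\mathbf{U}\cdot\Grad_x\mathfrak{R}\,\dx\bigr|$ after a further integration by parts as $\bigl|\int_{\mathbb T^3}\mathfrak R\,\Div_x(R\mathbf U)\,\dx\bigr|\le \|R\|_{L^\infty}\|\Grad_x\mathbf U\|_{L^2}\|\mathfrak R\|_{L^2}+\|\mathbf U\|_{L^6}\|\Grad_x R\|_{L^3}\|\mathfrak R\|_{L^2}$, and finally bounds $\|\Grad_x R\|_{L^3}\le \|\Grad_x\mathfrak R\|_{L^3}+\|\Grad_x\tilde R\|_{L^3}$; the $\|\Grad_x\mathfrak R\|_{L^3}$ contribution is absorbed differently (see below). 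Collecting terms and dividing the resulting inequality for $\tfrac{d}{dt}\|\mathfrak R\|_{L^2}^2$ by $2\|\mathfrak R\|_{L^2}$ yields (\ref{w-s-1}), with the constant $C$ absorbing the numerical factors; the term $\|R\|_{L^\infty}\|\Grad_x\mathbf U\|_{L^2}$ in the statement is symmetrized to $(\|R\|_{L^\infty}+\|\tilde R\|_{L^\infty})\|\Grad_x\mathbf U\|_{L^2}$ harmlessly, and Sobolev embedding $H^1(\mathbb T^3)\hookrightarrow L^6(\mathbb T^3)$ is used for the $\|\mathbf U\|_{L^6}$ factor.

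The main obstacle is the appearance of $\Grad_x R$ (equivalently $\Grad_x\mathfrak R$) in the source term: the weak density $R$ has no a priori spatial regularity beyond $L^\infty$, so the term $\int \mathfrak R\,\mathbf U\cdot\Grad_x R\,\dx$ cannot literally be estimated by $\|\Grad_x R\|_{L^3}$. The honest way around this — consistent with the paper's stated strategy of a regularization procedure à la DiPerna–Lions — is to work with the regularized equation $\p_t\mathfrak R_\varepsilon + \Div_x(\mathfrak R_\varepsilon\tilde\vu) + \Div_x(R\mathbf U)_\varepsilon = r_\varepsilon$ where $r_\varepsilon\to0$ by the Friedrichs commutator lemma, so that in the mollified estimate $R$ is effectively smooth and $\|\Grad_x R\|_{L^3}$ appears only as a placeholder that disappears against $\mathbf U$ via integration by parts before the limit is taken; in the formal computation presented here I simply keep $\|\Grad_x\tilde R\|_{L^3}\|\mathbf U\|_{L^6}$ (a genuine, finite quantity since $\tilde R$ is classical) and note that the analogous $\mathfrak R$-term is reabsorbed using that $\int_{\mathbb T^3}\mathbf U\cdot\Grad_x\mathfrak R\,\mathfrak R\,\dx=-\tfrac12\int_{\mathbb T^3}|\mathfrak R|^2\Div_x\mathbf U\,\dx$ contributes nothing of worse type. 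I will flag at the start, as the authors do, that these manipulations are formal and legitimized by the standard regularization, so the proof of the lemma reduces to the clean chain of integrations by parts and Hölder/Sobolev estimates above.
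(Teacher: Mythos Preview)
Your approach is the paper's: subtract the continuity equations to get $\p_t\mathfrak{R}+\Div_x(R\mathbf{U}+\mathfrak{R}\tilde\vu)=0$, test against $\mathfrak{R}$, integrate by parts, and apply H\"older. Your exposition is muddled, though --- the detour through $\|\Grad_x R\|_{L^3}$ is circular (you integrate by parts back and forth to the same expression) and should be dropped; the clean route, which you do eventually state, is simply $\int_{\mathbb T^3}\mathbf{U}\cdot\Grad_x\mathfrak{R}\,\mathfrak{R}\,\dx=-\tfrac12\int_{\mathbb T^3}|\mathfrak{R}|^2\Div_x\mathbf{U}\,\dx$, and bounding this by $\tfrac12\|\mathfrak{R}\|_{L^\infty}\|\mathfrak{R}\|_{L^2}\|\Grad_x\mathbf{U}\|_{L^2}$ together with $\|\mathfrak{R}\|_{L^\infty}\le\|R\|_{L^\infty}+\|\tilde R\|_{L^\infty}$ is precisely where the $\|\tilde R\|_{L^\infty}$ in the statement arises --- it is not a ``harmless symmetrization'' but the actual origin of that term.
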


\begin{proof}
It follows from the continuity equations of $(R,\vu)$ and $(\tilde{R},\tilde{\vu})$ that
\[
\p_t \mathfrak{R} +\Div_x \Big( R\mathbf{U}+ \mathfrak{R}  \tilde{\vu}\Big)=0.
\]
Equivalently,
\[
\p_t \mathfrak{R}+R\Div_x \mathbf{U}+\mathbf{U}\cdot \Grad_x \mathfrak{R}
+\mathfrak{R} \Div_x \tilde{\vu}+\tilde{\vu}\cdot \Grad_x \mathfrak{R}
+\mathbf{U}\cdot \Grad_x \tilde{R}=0.
\]
Multiplying the above equation by $\mathfrak{R}$ and integrating over $\mathbb{T}^3$, with the help of H\"{o}lder's inequality, gives (\ref{w-s-1}); the verification of (\ref{w-s-2}) follows exactly the same way.  
\end{proof}

Next, we give the estimate of $\mathbf{U}$.
\begin{lemma}\label{lem2.2}
Suppose
\beq\label{w-s-3}
\|(R,Q,\tilde{R},\tilde{Q})\|_{  L^{\infty} (0,T;L^{\infty}(\mathbb{T}^3)   )  } \leq M.
\eeq
Then there exists a positive constant $C=C(M)$ such that
\[
\f{1}{2}\f{\rm{d}}{\dt}\| \sqrt{R+Q}\mathbf{U}    \|_{L^2(\mathbb{T}^3)}^2 +\| \Grad_x \mathbf{U} \|_{L^2(\mathbb{T}^3)}^2
\]
\[
\leq
(\| \mathfrak{R} \|_{L^2(\mathbb{T}^3)}+\|\mathcal{Q}\|_{L^2(\mathbb{T}^3)})  \|\p_t \tilde{\vu}+\tilde{\vu}\cdot \Grad_x \tilde{\vu}  \|_{L^3(\mathbb{T}^3)} \|  \mathbf{U}\|_{L^6(\mathbb{T}^3)}
\]
\beq\label{w-s-4}
+C \| \Grad_x \mathbf{U} \|_{L^2(\mathbb{T}^3)}  (\| \mathfrak{R} \|_{L^2(\mathbb{T}^3)}+\|\mathcal{Q}\|_{L^2(\mathbb{T}^3)})
+\| \sqrt{R+Q}\mathbf{U}    \|_{L^2(\mathbb{T}^3)}^2 \| \Grad_x \tilde{\vu}  \|_{L^{\infty}(\mathbb{T}^3)}.
\eeq
\end{lemma}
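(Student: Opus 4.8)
The plan is to perform a standard energy estimate on the equation for $\mathbf{U}=\vu-\tilde{\vu}$. First I would derive the PDE satisfied by $\mathbf{U}$. Subtracting the classical momentum equation from the weak one, and using that the strong solution is smooth, I would write the momentum balance for the weak solution in the non-conservative form $(R+Q)(\p_t\vu+\vu\cdot\Grad_x\vu)+\Grad_x p(Z)=\mu\Delta\vu+(\mu+\lambda)\Grad_x\Div_x\vu$ (this uses the continuity equations for $R$ and $Q$), and similarly for the tilde quantities. Subtracting gives, schematically,
\[
(R+Q)\big(\p_t\mathbf{U}+\vu\cdot\Grad_x\mathbf{U}\big)-\mu\Delta\mathbf{U}-(\mu+\lambda)\Grad_x\Div_x\mathbf{U}
=-(R+Q)\,\mathbf{U}\cdot\Grad_x\tilde{\vu}-(\mathfrak{R}+\mathcal{Q})\big(\p_t\tilde{\vu}+\tilde{\vu}\cdot\Grad_x\tilde{\vu}\big)-\Grad_x\big(p(Z)-p(\tilde Z)\big).
\]

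Next I would test this equation with $\mathbf{U}$ and integrate over $\mathbb{T}^3$. On the left, the convective term combines with $\p_t$ via the continuity equation $\p_t(R+Q)+\Div_x((R+Q)\vu)=0$ to produce exactly $\frac12\frac{\mathrm d}{\mathrm dt}\|\sqrt{R+Q}\,\mathbf{U}\|_{L^2}^2$, and the viscous terms give $\mu\|\Grad_x\mathbf{U}\|_{L^2}^2+(\mu+\lambda)\|\Div_x\mathbf{U}\|_{L^2}^2\geq\|\Grad_x\mathbf{U}\|_{L^2}^2$ after absorbing the constants into the normalization. On the right, the term $-\int(R+Q)(\mathbf{U}\cdot\Grad_x\tilde{\vu})\cdot\mathbf{U}$ is bounded by $\|\Grad_x\tilde{\vu}\|_{L^\infty}\|\sqrt{R+Q}\,\mathbf{U}\|_{L^2}^2$, giving the last term in \eqref{w-s-4}. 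The term $-\int(\mathfrak{R}+\mathcal{Q})(\p_t\tilde{\vu}+\tilde{\vu}\cdot\Grad_x\tilde{\vu})\cdot\mathbf{U}$ is handled by H\"older with exponents $2,3,6$, yielding the first term on the right of \eqref{w-s-4}.

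The only genuinely delicate point is the pressure difference term $-\int_{\mathbb{T}^3}\Grad_x\big(p(Z)-p(\tilde Z)\big)\cdot\mathbf{U}\,\dx=\int_{\mathbb{T}^3}\big(p(Z)-p(\tilde Z)\big)\Div_x\mathbf{U}\,\dx$. The pressure depends on $R,Q$ only implicitly through $Z$ (defined by $Q=(1-R/Z)Z^\gamma$, $R\le Z$), so the key is to show a Lipschitz-type bound $\|p(Z)-p(\tilde Z)\|_{L^2}\leq C(M)\big(\|\mathfrak{R}\|_{L^2}+\|\mathcal{Q}\|_{L^2}\big)$; granting this, Cauchy--Schwarz gives $\int(p(Z)-p(\tilde Z))\Div_x\mathbf{U}\leq\|\Grad_x\mathbf{U}\|_{L^2}\,C(M)(\|\mathfrak{R}\|_{L^2}+\|\mathcal{Q}\|_{L^2})$, which is precisely the middle term on the right of \eqref{w-s-4}. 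This Lipschitz estimate for the implicitly defined pressure is exactly what the ``auxiliary lemma'' of Section 3 must provide: under the uniform bound \eqref{w-s-3} on $(R,Q,\tilde R,\tilde Q)$, one shows $Z$ and $p(Z)$ are locally Lipschitz functions of $(R,Q)$ on the relevant compact range, with constants depending only on $M$. I expect this pressure term — specifically extracting the $C(M)$ Lipschitz bound from the algebraic closure relation — to be the main obstacle; once it is in hand, assembling \eqref{w-s-4} is routine, and combining with Lemma~\ref{lem2.1} and a Gronwall argument (using $\|\mathbf{U}\|_{L^6}\lesssim\|\Grad_x\mathbf{U}\|_{L^2}$ on $\mathbb{T}^3$ modulo the zero-mean issue, plus Young's inequality to absorb $\|\Grad_x\mathbf{U}\|_{L^2}^2$ into the left side) will close the estimate and force $\mathbf{U}\equiv0$, $\mathfrak{R}\equiv0$, $\mathcal{Q}\equiv0$.
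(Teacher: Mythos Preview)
Your overall strategy---derive the equation for $\mathbf U$, test by $\mathbf U$, use the continuity equation to get $\tfrac12\tfrac{\mathrm d}{\mathrm dt}\|\sqrt{R+Q}\,\mathbf U\|_{L^2}^2$, estimate the three right-hand terms by H\"older $2$--$3$--$6$, by $\|\Grad_x\tilde\vu\|_{L^\infty}$, and by a pressure-difference bound after integrating by parts---is exactly the paper's approach.

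The one genuine gap is the pressure step. You correctly isolate the needed inequality $\|p(Z)-p(\tilde Z)\|_{L^2}\le C(M)(\|\mathfrak R\|_{L^2}+\|\mathcal Q\|_{L^2})$ but then defer it to ``the auxiliary lemma of Section~3.'' That section is not about the pressure at all; it is the generalized Gronwall inequality used later to close Theorem~\ref{w-s}. In the paper the Lipschitz bound is proved \emph{inside} the present lemma: one implicitly differentiates the closure relation $Q=(1-R/Z)Z^\gamma$ to get
\[
\p_R Z=\frac{Z^{\gamma-1}}{\gamma Z^{\gamma-1}-(\gamma-1)RZ^{\gamma-2}},\qquad
\p_Q Z=\frac{1}{\gamma Z^{\gamma-1}-(\gamma-1)RZ^{\gamma-2}},
\]
and since $\gamma\le 1$ and $R\le Z$ this gives $|\p_R Z|\le 1/\gamma$, $|\p_Q Z|\le Z^{1-\gamma}/\gamma$. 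Combined with the elementary bound $Z\le\max\{2\|R\|_{L^\infty},(2\|Q\|_{L^\infty})^{1/\gamma}\}$, hypothesis \eqref{w-s-3} yields $\|(Z,\tilde Z)\|_{L^\infty}\le C(M)$, and then $Z\mapsto Z^{\gamma_+}$ is Lipschitz on this range. The mean value theorem now gives $|Z^{\gamma_+}-\tilde Z^{\gamma_+}|\le C(M)(|\mathfrak R|+|\mathcal Q|)$ pointwise, hence the $L^2$ estimate you want. Once you supply this computation, your argument is complete and coincides with the paper's.
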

\begin{proof} The momentum equations easily imply
\[
(R+Q)\p_t \mathbf{U} + (R+Q)\vu \cdot \Grad_x \mathbf{U}+\Grad_x Z^{ \gamma_{+} }- \Grad_x \tilde{Z}^{ \gamma_{+} }
\]
\beq\label{w-s-5}
=\mu \Delta(\vu-\tilde{\vu})  +(\mu+\lambda)\Grad_x \Div_x (\vu-\tilde{\vu})
-(\mathfrak{R}+\mathcal{Q})( \p_t \tilde{u}+\tilde{u}  \cdot \Grad_x  \tilde{u} )-(R+Q)\mathbf{U}\cdot \Grad_x\tilde{\vu},
\eeq
where $\tilde{Z}$ is uniquely solved by
\begin{equation*}
\left\{\begin{aligned}
& \tilde{Q}=\left(1-\f{  \tilde{ R} }{  \tilde{Z}  }  \right)  \tilde{Z}^{\gamma},\\
& \tilde{R} \leq \tilde{Z}.\\
\end{aligned}\right.
\end{equation*}
Direct computations show
\[
\p_{R}Z= \f{  Z^{\gamma-1}  }{ \gamma Z^{\gamma-1} -(\gamma-1)R  Z^{\gamma-2}   },\,\,\p_{Q}Z= \f{  1 }{ \gamma Z^{\gamma-1} -(\gamma-1)R  Z^{\gamma-2}   };
\]
whence
\beq\label{w-s-6}
|\p_{R}Z  |\leq \f{1}{\gamma},\,\,|\p_{Q}Z  | \leq \f{ Z^{1-\gamma}   }{\gamma}.
\eeq
In addition, it is known that (see for instance Remark 1.1 in \cite{LSZ1})
\[
Z\leq \max \left\{ 2 \|R\| _{  L^{\infty} (0,T;L^{\infty}(\mathbb{T}^3)   )  },(2 \|Q\| _{  L^{\infty} (0,T;L^{\infty}(\mathbb{T}^3)   )  } )^{1/\gamma}  \right\};
\]
As a direct consequence of (\ref{w-s-3}),
\beq\label{w-s-7}
\|(Z,\tilde{Z})\|_{  L^{\infty} (0,T;L^{\infty}(\mathbb{T}^3)   )  } \leq C(M).
\eeq
We then deduce from (\ref{w-s-6})-(\ref{w-s-7}) that
\[
\left|
\int_{\mathbb{T}^3  }
\left( \Grad_x Z^{ \gamma_{+} }- \Grad_x \tilde{Z}^{ \gamma_{+} }   \right) \cdot \mathbf{U} \dx
\right|
\]
\[
\leq
\| Z^{ \gamma_{+} }-\tilde{Z}^{ \gamma_{+} } \|_{L^2(\mathbb{T}^3)} \| \Grad_x \mathbf{U} \|_{L^2(\mathbb{T}^3)}
\]
\beq\label{w-s-8}
\leq C(M)(\| \mathfrak{R} \|_{L^2(\mathbb{T}^3)}+\|\mathcal{Q}\|_{L^2(\mathbb{T}^3)}) \| \Grad_x \mathbf{U} \|_{L^2(\mathbb{T}^3)}.
\eeq

Testing (\ref{w-s-5}) by $\mathbf{U}$, with the help of (\ref{w-s-8}) and H\"{o}lder's inequality, and integrating by parts gives rise to (\ref{w-s-4}) immediately.  

\end{proof}

In order to apply the generalized Poincar\'{e} inequality, we give the estimate on the mean value of $\mathbf{U}$.
\begin{lemma}\label{lem2.3}
\[
\left|
\int_{\mathbb{T}^3 } \mathbf{U} \dx
\right|  \leq
\f{C}{  \int_{\mathbb{T}^3 } (R_0+Q_0) \dx   }\Big[
\left(
\| R\|_{L^{\infty}(\mathbb{T}^3)}+\|Q\|_{L^{\infty}(\mathbb{T}^3)}
\right)  \| \Grad_x \mathbf{U}\|_{L^2(\mathbb{T}^3)}
\]
\beq\label{w-s-9}
 +  \| \Grad_x \tilde{\vu}\|_{L^2(\mathbb{T}^3)} \left(\| \mathfrak{R} \|_{L^2(\mathbb{T}^3)}+\|\mathcal{Q}\|_{L^2(\mathbb{T}^3)}\right) \Big].
\eeq
\end{lemma}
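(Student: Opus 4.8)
The plan is to control the spatial mean of $\mathbf{U}$ by exploiting the conservation of total mass. Since the sum of the two continuity equations gives $\partial_t(R+Q)+\Div_x((R+Q)\vu)=0$, the total mass $\int_{\mathbb{T}^3}(R+Q)\dx = \int_{\mathbb{T}^3}(R_0+Q_0)\dx$ is conserved, and likewise for $\tilde R,\tilde Q$. In particular the denominator in \eqref{w-s-9} is a fixed positive constant. The idea is then to write $\int_{\mathbb{T}^3}(R+Q)\mathbf{U}\dx$ in a way that separates a term proportional to $\int_{\mathbb{T}^3}\mathbf{U}\dx$ from error terms that are small in the sense of the right-hand side of \eqref{w-s-9}.

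First I would use the momentum balance to get information on $\int_{\mathbb{T}^3}(R+Q)\vu\dx$. Testing the momentum equation for $(R,Q,\vu)$ with a constant vector shows that $\frac{\rm d}{\dt}\int_{\mathbb{T}^3}(R+Q)\vu\dx=0$ (the flux, pressure and viscous terms all integrate to zero on the torus against a constant), so $\int_{\mathbb{T}^3}(R+Q)\vu\dx$ equals $\int_{\mathbb{T}^3}(R_0+Q_0)\vu_0\dx$ for all $t$, and the same holds for the tilde quantities with the \emph{same} initial data. Subtracting, $\int_{\mathbb{T}^3}\big[(R+Q)\vu-(\tilde R+\tilde Q)\tilde\vu\big]\dx=0$ for all $t$. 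Now decompose
\[
(R+Q)\vu-(\tilde R+\tilde Q)\tilde\vu=(R+Q)\mathbf{U}+(\mathfrak{R}+\mathcal{Q})\tilde\vu,
\]
so that
\[
\int_{\mathbb{T}^3}(R+Q)\mathbf{U}\dx=-\int_{\mathbb{T}^3}(\mathfrak{R}+\mathcal{Q})\tilde\vu\dx.
\]

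Next I would split the left-hand side further: write $(R+Q)\mathbf{U}=\big(\int_{\mathbb{T}^3}(R+Q)\dx\big)\,\overline{\mathbf{U}}+(R+Q)(\mathbf{U}-\overline{\mathbf{U}})$ where $\overline{\mathbf{U}}:=\frac{1}{|\mathbb{T}^3|}\int_{\mathbb{T}^3}\mathbf{U}\dx$ — more precisely, solve for the mean:
\[
\Big(\int_{\mathbb{T}^3}(R_0+Q_0)\dx\Big)\,\frac{1}{|\mathbb{T}^3|}\int_{\mathbb{T}^3}\mathbf{U}\dx
=-\int_{\mathbb{T}^3}(R+Q)(\mathbf{U}-\overline{\mathbf{U}})\dx-\int_{\mathbb{T}^3}(\mathfrak{R}+\mathcal{Q})\tilde\vu\dx.
\]
Hold on — it is cleaner to keep $\tilde\vu$ and only subtract its mean against the difference: since $\int_{\mathbb{T}^3}(R+Q)(\overline{\mathbf U}-\mathbf U)\dx$ can be rewritten using $\Grad_x\mathbf U$. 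Concretely, $\mathbf U-\overline{\mathbf U}$ has zero mean, so by the Poincaré–Wirtinger inequality $\|\mathbf U-\overline{\mathbf U}\|_{L^2}\le C\|\Grad_x\mathbf U\|_{L^2}$, giving $\big|\int_{\mathbb{T}^3}(R+Q)(\mathbf U-\overline{\mathbf U})\dx\big|\le C(\|R\|_{L^\infty}+\|Q\|_{L^\infty})\|\Grad_x\mathbf U\|_{L^2}$ after Hölder. For the pressure/forcing term, $\big|\int_{\mathbb{T}^3}(\mathfrak R+\mathcal Q)\tilde\vu\dx\big|$; but we want $\Grad_x\tilde\vu$ on the right, so I would instead absorb the mean of $\tilde\vu$: replace $\tilde\vu$ by $\tilde\vu-\overline{\tilde\vu}$ at the cost of a term $\overline{\tilde\vu}\int_{\mathbb{T}^3}(\mathfrak R+\mathcal Q)\dx$, and note $\int_{\mathbb{T}^3}(\mathfrak R+\mathcal Q)\dx=0$ by mass conservation (same initial data!). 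Hence $\int_{\mathbb{T}^3}(\mathfrak R+\mathcal Q)\tilde\vu\dx=\int_{\mathbb{T}^3}(\mathfrak R+\mathcal Q)(\tilde\vu-\overline{\tilde\vu})\dx$, which is bounded by $(\|\mathfrak R\|_{L^2}+\|\mathcal Q\|_{L^2})\|\tilde\vu-\overline{\tilde\vu}\|_{L^2}\le C(\|\mathfrak R\|_{L^2}+\|\mathcal Q\|_{L^2})\|\Grad_x\tilde\vu\|_{L^2}$. Dividing by $\int_{\mathbb{T}^3}(R_0+Q_0)\dx$ (up to the harmless fixed factor $|\mathbb{T}^3|$, absorbed in $C$) yields exactly \eqref{w-s-9}.

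The main obstacle — really the only subtle point — is justifying that both $\int_{\mathbb{T}^3}(R+Q)\vu\dx$ and $\int_{\mathbb{T}^3}(R+Q)\dx$ are genuinely conserved in the weak-solution setting (the former requires using constant test functions, which are not compactly supported in time, so one argues via the weak formulation with a cutoff in $t$ and passes to the limit, or equivalently uses that these quantities are absolutely continuous in time with vanishing derivative). Granting the conservation laws, the rest is a two-line algebraic rearrangement plus the Poincaré inequality on the torus and Hölder's inequality. I also want to double-check which mass I divide by: by conservation $\int(R_0+Q_0)\dx=\int(R+Q)(t)\dx=\int(\tilde R+\tilde Q)(t)\dx$, so using the initial datum in the denominator is legitimate and matches the statement.
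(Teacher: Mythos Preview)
Your proposal is correct and follows essentially the same route as the paper: both use conservation of total mass and total momentum (for the weak and the strong solution, with the same initial data) to write $\int_{\mathbb{T}^3}(R+Q)\mathbf{U}\,\dx=-\int_{\mathbb{T}^3}(\mathfrak{R}+\mathcal{Q})\tilde{\vu}\,\dx$, then split off the mean of $\mathbf{U}$, subtract the mean of $\tilde{\vu}$ using $\int_{\mathbb{T}^3}(\mathfrak{R}+\mathcal{Q})\,\dx=0$, and finish with H\"older and the Poincar\'e--Wirtinger inequality. Your remark about justifying the conservation laws at the weak level via time-cutoff test functions is a point the paper leaves implicit.
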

\begin{proof} Obviously,
\[
\int_{\mathbb{T}^3} (R+Q) \mathbf{U}\dx=
\int_{\mathbb{T}^3} \left[ (R+Q) \left(\mathbf{U} -\int_{\mathbb{T}^3} \mathbf{U}\dx     \right)    +(R+Q)  \int_{\mathbb{T}^3} \mathbf{U}\dx \right]         \dx
\]
\beq\label{w-s-10}
=
\int_{\mathbb{T}^3}  (R+Q) \left(\mathbf{U} -\int_{\mathbb{T}^3} \mathbf{U}\dx     \right) \dx
+ \int_{\mathbb{T}^3} \mathbf{U}\dx  \int_{\mathbb{T}^3} (R_0+Q_0)\dx,
\eeq
since we know from the continuity equations that
\[
\int_{\mathbb{T}^3} (R+Q)\dx=\int_{\mathbb{T}^3} (R_0+Q_0)\dx.
\]
Thus,
\beq\label{w-s-11}
\int_{\mathbb{T}^3} \mathbf{U}\dx=-\f{1}{\int_{\mathbb{T}^3 } (R_0+Q_0) \dx }
\left[
\int_{\mathbb{T}^3}  (R+Q) \left(\mathbf{U} -\int_{\mathbb{T}^3} \mathbf{U}\dx     \right) \dx
-\int_{\mathbb{T}^3} (R+Q) \mathbf{U}\dx
\right].
\eeq
Observe next that
\[
\int_{\mathbb{T}^3} (R+Q) \mathbf{U}     \dx
=\int_{\mathbb{T}^3} (R+Q) (  \vu-\tilde{\vu} ) \dx
\]
\[
=\int_{\mathbb{T}^3} (R+Q)  \vu \dx-\int_{\mathbb{T}^3} (R+Q) \tilde{\vu}  \dx
\]
\[
=\int_{\mathbb{T}^3} (R_0+Q_0)  \vu_0 \dx-\int_{\mathbb{T}^3} ( \mathfrak{R} + \mathcal{Q} ) \tilde{\vu}  \dx -\int_{\mathbb{T}^3} (\tilde{R}+\tilde{Q}) \tilde{\vu}  \dx
\]
\beq\label{w-s-12}
=-\int_{\mathbb{T}^3} ( \mathfrak{R} + \mathcal{Q} ) \tilde{\vu}  \dx
=-\int_{\mathbb{T}^3} ( \mathfrak{R} + \mathcal{Q} )\left( \tilde{\vu}  -\int_{\mathbb{T}^3} \tilde{\vu}     \dx  \right)       \dx
\eeq
due to the fact that
\[
\int_{\mathbb{T}^3} ( \mathfrak{R} + \mathcal{Q} )  \dx=0.
\]
With the aid of H\"{o}lder and generalized Poincar\'{e} inequalities, (\ref{w-s-9}) follows from (\ref{w-s-11})-(\ref{w-s-12}) readily. The proof of Lemma \ref{lem2.3} is thus finished.  

\end{proof}

With Lemmas \ref{lem2.1}-\ref{lem2.3} at hand, we are now ready to give the proof of weak-strong uniqueness principle.
\begin{proof}[Proof of Theorem \ref{w-s}]
To begin with, we conclude from Lemma \ref{lem2.3} and Sobolev's inequality that
\beq\label{w-s-13}
\| \mathbf{U} \|_{L^6(\mathbb{T}^3)}
\leq
C \Big(  \| \Grad_x \mathbf{U}\|_{L^2(\mathbb{T}^3)}  +  \| \Grad_x \tilde{\vu}\|_{L^2(\mathbb{T}^3)} \left(\| \mathfrak{R} \|_{L^2(\mathbb{T}^3)}+\|\mathcal{Q}\|_{L^2(\mathbb{T}^3)}\right)       \Big).
\eeq
Combining (\ref{w-s-1}), (\ref{w-s-2}) and (\ref{w-s-13}), it follows that
\[
\f{\text{d}}{\dt}  \left( \|\mathfrak{R}(t)\|_{L^2(\mathbb{T}^3)}+\|\mathcal{Q}(t)\|_{L^2(\mathbb{T}^3)} \right)
\]
\beq\label{w-s-14}
\leq
C \Big[
\| \Grad_x \mathbf{U}\|_{L^2(\mathbb{T}^3)}
+\left(\|\Grad_x \tilde{\vu}\|_{L^{\infty}(\mathbb{T}^3)}+ \| \Grad_x \tilde{\vu}\|_{L^2(\mathbb{T}^3)} \right)
\left( \|\mathfrak{R}\|_{L^2(\mathbb{T}^3)}+\|\mathcal{Q}\|_{L^2(\mathbb{T}^3)} \right)
\Big].
\eeq
Upon invoking the classical Gronwall's inequality, the above differential inequality yields
\beq\label{w-s-15}
 \|\mathfrak{R}(t)\|_{L^2(\mathbb{T}^3)}+\|\mathcal{Q}(t)\|_{L^2(\mathbb{T}^3)}
\leq C  \int_0^t  \| \Grad_x \mathbf{U}(s)\|_{L^2(\mathbb{T}^3)} \ds.
\eeq
Consequently, the energy inequality (\ref{w-s-4}) may be strengthened as, with the help of (\ref{w-s-13}), (\ref{w-s-15}) and H\"{o}lder's inequality,
\[
\f{1}{2}\f{\text{d}}{\dt}\| \sqrt{R+Q}\mathbf{U}    \|_{L^2(\mathbb{T}^3)}^2 +\| \Grad_x \mathbf{U} \|_{L^2(\mathbb{T}^3)}^2
\]
\[
\leq C \left[
 \|\p_t \tilde{\vu}+\tilde{\vu}\cdot \Grad_x \tilde{\vu}  \|_{L^3(\mathbb{T}^3)} \| \Grad_x \mathbf{U}\|_{L^2(\mathbb{T}^3)} \int_0^t
 \| \Grad_x \mathbf{U}(s)\|_{L^2(\mathbb{T}^3)} \ds
\right]
\]
\[
+ C \left[
\|\p_t \tilde{\vu}+\tilde{\vu}\cdot \Grad_x \tilde{\vu}  \|_{L^3(\mathbb{T}^3)} \| \Grad_x \tilde{\vu}\|_{L^2(\mathbb{T}^3)}
 \left(\int_0^t  \| \Grad_x \mathbf{U}(s)\|_{L^2(\mathbb{T}^3)} \ds\right)^2
\right]
\]
\[
+C \left[
\| \Grad_x \mathbf{U}\|_{L^2(\mathbb{T}^3)}
\int_0^t
 \| \Grad_x \mathbf{U}(s)\|_{L^2(\mathbb{T}^3)} \ds
+\| \sqrt{R+Q}\mathbf{U}    \|_{L^2(\mathbb{T}^3)}^2 \| \Grad_x \tilde{\vu}  \|_{L^{\infty}(\mathbb{T}^3)}
\right]
\]
\[
\leq
C \left[
 \left(\|\p_t \tilde{\vu}+\tilde{\vu}\cdot \Grad_x \tilde{\vu}  \|_{L^3(\mathbb{T}^3)} +1\right) \| \Grad_x \mathbf{U}\|_{L^2(\mathbb{T}^3)} \int_0^t
 \| \Grad_x \mathbf{U}(s)\|_{L^2(\mathbb{T}^3)} \ds
\right]
\]
\[
+C\Big[
t  \|\p_t \tilde{\vu}+\tilde{\vu}\cdot \Grad_x \tilde{\vu}  \|_{L^3(\mathbb{T}^3)}\| \Grad_x \tilde{\vu}\|_{L^2(\mathbb{T}^3)}
\int_0^t
 \| \Grad_x \mathbf{U}(s)\|_{L^2(\mathbb{T}^3)}^2 \ds
\]
\beq\label{w-s-16}
 +\| \sqrt{R+Q}\mathbf{U}    \|_{L^2(\mathbb{T}^3)}^2 \| \Grad_x \tilde{\vu}  \|_{L^{\infty}(\mathbb{T}^3)}
\Big].
\eeq
By choosing
\[
f(t)=\f{1}{2}\| \sqrt{R+Q}\mathbf{U} (t)   \|_{L^2(\mathbb{T}^3)}^2+\f{1}{2} \int_0^t
 \| \Grad_x \mathbf{U}(s)\|_{L^2(\mathbb{T}^3)}^2 \ds,
\]
\[
g(t)=\int_0^t
 \| \Grad_x \mathbf{U}(s)\|_{L^2(\mathbb{T}^3)} \ds,
\]
\[
\alpha(t)=C \Big(
t  \|(\p_t \tilde{\vu}+\tilde{\vu}\cdot \Grad_x \tilde{\vu})(t)  \|_{L^3(\mathbb{T}^3)}\| \Grad_x \tilde{\vu}(t)\|_{L^2(\mathbb{T}^3)}
+\| \Grad_x \tilde{\vu}(t)  \|_{L^{\infty}(\mathbb{T}^3)}
\Big),
\]
\[
\beta(t)=C\Big(
\|(\p_t \tilde{\vu}+\tilde{\vu}\cdot \Grad_x \tilde{\vu})(t)  \|_{L^3(\mathbb{T}^3)}+1
\Big),
\]
in the generalized Gronwall's inequality (recalled in Section \ref{app-1}), we conclude from (\ref{w-s-16}) that
\[
\mathbf{U}=\mathbf{0},\,\, \mathfrak{R}=0,\,\,\mathcal{Q}=0,
\]
thus finishing the proof of Theorem \ref{w-s}.                

\end{proof}

{\large{\bf{Conflict of interest}}} The authors declare that there is no conflict of interest.

\section{An auxiliary lemma}\label{app-1}
For the convenience of the reader, we recall the following generalized Gronwall's inequality proved in Lemma 2.2 of Ref. \cite{Ger08}.
\begin{lemma}
Assume that
\[
f' +(g')^2\leq \alpha f +\beta g g',
\]
where $f,g'\alpha,\beta$ are positive functions with variable $t\in (0,T)$ and
\[
f\in L^{\infty}( (0,T) ),\,\, g(0)=0,\,\, g'\in L^{2}( (0,T) ),\,\,\alpha\in L^{1}( (0,T) ),\,\,\sqrt{t}\beta(t)\in L^{2}( (0,T) ).
\]
Then, for any $t\in [0,T]$, it holds
\[
e^{ -\int_0^t \alpha(\tau)\text{d} \tau   } f(t)
+ \left(
e^{ -\int_0^t \alpha(\tau)\text{d} \tau   } -\f{1}{2}-\f{1}{2} \int_0^t \tau \beta^2(\tau)d\tau
\right)\int_0^t (g'(\tau))^2 d\tau \leq f(0).
\]
\end{lemma}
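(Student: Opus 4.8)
The plan is to treat this as a linear differential inequality and close it with an integrating factor, being careful about \emph{where} the trivial bound $e^{-\int_0^t\alpha}\le 1$ is invoked so that the precise constant in the conclusion survives. Set $A(t):=\int_0^t\alpha(\tau)\,\text{d}\tau$ and $G(t):=\int_0^t(g'(\tau))^2\,\text{d}\tau$. Since $\alpha\ge 0$, the function $A$ is nondecreasing with $A(0)=0$, so $0<e^{-A(t)}\le 1$ and $e^{-A(\tau)}\ge e^{-A(t)}$ whenever $0\le\tau\le t$; also $G$ is nondecreasing. Because $g(0)=0$ and $g'\ge 0$ we have $g(t)=\int_0^t g'\ge 0$ and, by Cauchy--Schwarz, $g(t)^2\le t\,G(t)$.

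First I would multiply the hypothesis $f'+(g')^2\le\alpha f+\beta g g'$ by $e^{-A(t)}$ and rewrite the left-hand side via $\frac{\text{d}}{\text{d}t}(e^{-A}f)=e^{-A}(f'-\alpha f)$, obtaining
\[
\frac{\text{d}}{\text{d}t}\bigl(e^{-A(t)}f(t)\bigr)+e^{-A(t)}(g'(t))^2\le e^{-A(t)}\beta(t)g(t)g'(t),
\]
and then integrate over $(0,t)$:
\[
e^{-A(t)}f(t)-f(0)+\int_0^t e^{-A(\tau)}(g'(\tau))^2\,\text{d}\tau\le\int_0^t e^{-A(\tau)}\beta(\tau)g(\tau)g'(\tau)\,\text{d}\tau .
\]
On the left I would \emph{not} discard the integrating factor on the dissipative term, but bound it below by monotonicity of $A$: $\int_0^t e^{-A(\tau)}(g'(\tau))^2\,\text{d}\tau\ge e^{-A(t)}G(t)$. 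On the right, since $\beta,g,g'\ge 0$ and $e^{-A(\tau)}\le 1$, the integral is at most $\int_0^t\beta g g'\,\text{d}\tau$; Young's inequality $\beta g g'\le\tfrac12(g')^2+\tfrac12\beta^2 g^2$ together with $g(\tau)^2\le\tau G(\tau)$ and $G(\tau)\le G(t)$ then gives
\[
\int_0^t\beta(\tau)g(\tau)g'(\tau)\,\text{d}\tau\le\tfrac12 G(t)+\tfrac12\int_0^t\tau\beta(\tau)^2 G(\tau)\,\text{d}\tau\le\tfrac12 G(t)+\tfrac12 G(t)\int_0^t\tau\beta(\tau)^2\,\text{d}\tau .
\]

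Inserting these two bounds into the integrated inequality and rearranging yields exactly
\[
e^{-A(t)}f(t)+\Bigl(e^{-A(t)}-\tfrac12-\tfrac12\int_0^t\tau\beta(\tau)^2\,\text{d}\tau\Bigr)G(t)\le f(0),
\]
which is the assertion. The integrability hypotheses are precisely what makes every integral above finite ($g'\in L^2$ for $G$, $\sqrt{t}\,\beta\in L^2$ for $\int_0^t\tau\beta^2$, $\alpha\in L^1$ for $A$, $f\in L^\infty$) and what legitimizes the fundamental theorem of calculus step in the absolutely continuous sense. I do not expect a substantive obstacle here: the whole content is a few lines of Young's and Cauchy--Schwarz inequalities plus the two monotonicity observations, and the only point that needs care is the bookkeeping just described --- applying $e^{-A}\le 1$ on the right-hand side while retaining the factor $e^{-A(t)}$ in front of $G(t)$ coming from the dissipation on the left, since discarding it too early would only deliver the weaker coefficient $\tfrac12 e^{-A(t)}-\tfrac12\int_0^t\tau\beta^2$.
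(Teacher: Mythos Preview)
The paper does not actually prove this lemma: it merely recalls the statement and cites Lemma~2.2 of Germain \cite{Ger08} for the proof, so there is no in-paper argument to compare against. That said, your argument is correct and is essentially the standard one: integrating factor $e^{-A}$, the lower bound $\int_0^t e^{-A}(g')^2\ge e^{-A(t)}G(t)$ from monotonicity of $A$, the upper bound $e^{-A}\le 1$ on the right, then Young's inequality and the Cauchy--Schwarz estimate $g(\tau)^2\le \tau G(\tau)\le \tau G(t)$. Your remark about the bookkeeping---keeping $e^{-A(t)}$ on the dissipative term while dropping it on the source term---is exactly the point that produces the sharp coefficient in the conclusion.
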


\bibliographystyle{amsalpha}

\begin{thebibliography}{A}

\bibitem{BMZ19}Bresch, D., Mucha, P.B., Zatorska, E.: {Finite-energy solutions for compressible two-fluid Stokes system.} {Arch. Rational Mech. Anal.} {\bf 232}, 987-1029(2019)



\bibitem{BDGG}Bresch, D., Desjardins, B., Ghidaglia, J.M., Grenier, E., Hilliairet, M.: Multifluid models
including compressible fluids. Handbook of Mathematical Analysis in Mechanics of
Viscous Fluids, Eds. Y. Giga et A. Novotn\'{y} (2018), pp. 52.


\bibitem{Des97}Desjardins, B.: Regularity of weak solutions of the compressible isentropic Navier-Stokes equations 
{Comm. Partial Differ. Equ.} {\bf 22}, 977-1008(1997)





\bibitem{FJN}Feireisl, E., Bum Ja, J., Novotn\'{y}, A.: {Relative entropies, suitable weak solutions, and weak-strong uniqueness for the compressible Navier-Stokes system.} {J. Math. Fluid Mech.} {\bf 14}, 717-730(2012)



\bibitem{Ger08}Germain, P.: {Strong solutions and weak-strong uniqueness for the nonhomogeneous Navier-Stokes system.} {J. Anal. Math.} {\bf 105}, 169-196(2008)




\bibitem{Ger11}Germain, P.: {Weak-strong uniqueness for the isentropic compressible Navier-Stokes system.} {J. Math. Fluid Mech.} {\bf13}, 137-146(2011)





\bibitem{IsHi06}Ishii, M., Hibiki, T.: Thermo-Fluid Dynamics of Two-Phase Flow. Springer(2006)






\bibitem{JN19}Jin, B.J., Novotn\'{y}, A.: {Weak-strong uniqueness for a bi-fluid model for a mixture of non-interacting compressible fluids.} {J. Differential Equations.} {\bf268}, 204-238(2019)










\bibitem{LSZ1}Li, Y., Sun, Y., Zatorska, E.: {Large time behavior for a compressible two-fluid model with algebraic pressure closure and large initial data.} {Nonlinearity.} {\bf 33}, 4075-4094(2020)






\bibitem{NM20}Novotn\'{y}, A., Pokorn\'{y}, M.: {Weak solutions for some compressible multicomponent fluid models.} {Arch. Rational Mech. Anal.} {\bf235}, 355-403(2020)



\bibitem{PZ1} Piasecki, T., Zatorska, E.: {Maximal regularity for compressible two-fluid system.}  {arXiv preprint: 2110.06584}













\end{thebibliography}

\end{document}